\newcommand*{\doi}[1]{\href{http://dx.doi.org/#1}{doi: #1}}
\newcommand{\ep}{\epsilon}
\newcommand{\m}{\mbox{d}}
\newcommand{\la}{\left\langle}
\newcommand{\ra}{\right\rangle}
\definecolor{fu}{RGB}{0, 0, 0}
\definecolor{gr}{RGB}{21, 194, 21}
\newcolumntype{L}{>{$}c<{$}}
\theoremstyle{plain}
\newtheorem{theorem}{Theorem}[section]
\newtheorem{prop}[theorem]{Proposition}
\newtheorem*{assumg*}{Assumption~(G)}
\newtheorem*{assumng*}{Assumption~(NG)}
\newtheorem*{assumng1*}{Assumption~(NG-bis)}
\theoremstyle{definition}
\newtheorem{rem}[theorem]{Remark}
\newlength\tindent
\title{\vspace{-5mm}\fontsize{24pt}{10pt} \huge{A priori positivity of solutions to a non-conservative stochastic thin-film equation\vspace{1.0 pc}}} 
\author{
\textsc{Federico Cornalba}
\vspace{1.0 pc}\\ \footnotesize{Department of Mathematical Sciences, University of Bath, Bath, BA2 7AY, United Kingdom}\\ \footnotesize{email: \tt{F.Cornalba@bath.ac.uk}}
\vspace{-0mm}
}
\date{}
\begin{document}

\maketitle 
\thispagestyle{empty}

\pagestyle{fancy} 


\renewenvironment{abstract}
 {\small
  \begin{center}
  \bfseries \abstractname\vspace{-0.0 pc}\vspace{0pt}
  \end{center}
  \list{}{%
    \setlength{\leftmargin}{10mm}
    \setlength{\rightmargin}{\leftmargin}%
  }%
  \item\relax}
 {\endlist}

 \newenvironment{thanks}
 {
  \list{}{%
    \setlength{\leftmargin}{0mm}
    \setlength{\rightmargin}{\leftmargin}%
  }%
  \item\relax}
 {\endlist}

\begin{abstract}

Stochastic conservation laws are often challenging when it comes to proving existence of non-negative solutions. In a recent work by J. Fischer and G. Gr\"un (2018, \emph{Existence of positive solutions to stochastic thin-film equations}, SIAM J. Math. Anal.), existence of positive martingale solutions to a conservative stochastic thin-film equation is established in the case of quadratic mobility. 
In this work, we focus on a larger class of mobilities (including the linear one) for the thin-film model. In order to do so, we need to introduce nonlinear source potentials, thus obtaining a non-conservative version of the thin-film equation. For this model, we assume the existence of a sufficiently regular local solution (i.e., defined up to a stopping time $\tau$) and, by providing suitable conditions on the source potentials and the noise, we prove that such solution can be extended up to any $T>0$ and that it is positive with probability one.
A thorough comparison with the aforementioned reference work is provided. 

{\bfseries Key words}: thin-film equation, drift correction, It\^o calculus, nonlinearity, a priori analysis.
  
{\bfseries AMS (MOS) Subject Classification}: 60H15, 35R60, 35G20 

\end{abstract}

\section{Introduction}

We are interested in stochastic equations driven by random noise in spatial divergence form. A wide class of these equations can be written as 
\begin{align}\label{eq:2000}
\frac{\partial u}{\partial t}=\nabla\cdot\left(m(u)\nabla\frac{\delta F[u]}{\delta u}\right)+\Gamma(u)+\nabla\cdot\left(\sigma\sqrt{m(u)}\mathcal{W}\right)=:\mathscr{D}_1+\mathscr{D}_2+\mathscr{S},
\end{align}
in the non-negative unknown $u=u(x,t)$, for $x\in D\subset\mathbb{R}^d$ and $t>0$. Equation \eqref{eq:2000} describes the evolution of a system made of a large number of particles. The particles are subject to a gradient-flow dynamics (governed by the free energy $F$ featured in the first drift term $\mathscr{D}_1$), to a nonlinear source (given by $\Gamma(u)\equiv\mathscr{D}_2$), and to mesoscopic thermal fluctuations (stochastic term $\mathscr{S}$, comprising an infinite-dimensional noise $\mathcal{W}$ and a given scaling parameter $\sigma\neq 0$). The evolution of the system is described by the particle density $u$, which is naturally required to be non-negative. 
The drift component $\mathscr{D}_1$ and the noise term $\mathscr{S}$ satisfy a fluctuation-dissipation relation \cite{Chandler1987a} which can be identified in the powers of the so-called \emph{mobility coefficient} $m(u)$ being 1 in $\mathscr{D}_1$ and $\frac{1}{2}$ in $\mathscr{S}$, respectively.\vspace{0.5 pc}\\
When $m(u)\equiv u$ and $\Gamma\equiv 0$, equation \eqref{eq:2000} is known as the \emph{Dean-Kawasaki} model \cite{Dean1996a,Kawasaki1998a}. 
This model poses hard mathematical challenges, the first of which is proving existence of positive solutions up to some given time $T>0$. The main difficulties in doing so reside in the nature of the stochastic noise $\mathscr{S}$. To start with, this noise lacks Lipschitz properties and spatial regularity. If, in addition, we assume $\mathcal{W}$ to be a space-time white noise (this is a relevant choice in the physics literature), then the only existence result we are aware of is the recent work \cite{Lehmann:2018aa}. More specifically, in the case of $F(u):=(N/2)\int_{D}{u(x)\log(u(x))\m x}$ (corresponding to the Gibbs-Boltzmann entropy functional with pre-factor $N/2>0$), 
a unique probability measure-valued solution exists if and only if $N\in\mathbb{N}$; however, in this case, the solution is trivial, and coincides with the empirical measure associated with $N$ independent diffusion processes. \vspace{0.5 pc}\\
Again for $m(u)\equiv u$, and for a specific class of $\Gamma\neq 0$, existence of measure-valued martingale solutions to \eqref{eq:2000} is available in space dimension one, see the work of von Renesse and coworkers \cite{Renesse2009a,Andres2010a,Konarovskyi2018a,Konarovskyi2017a}. These results are based on the application of Dirichlet form methods, as well as on the interaction between drift and noise in the context of the Wasserstein geometry over the space of square-integrable probability measures. We also mention \cite{Cornalba2018regularised} for a high-probability existence and uniqueness result for a regularised version of \eqref{eq:2000}. 
\vspace{0.5 pc}\\
In this work we investigate a priori positivity of solutions, 
up to any chosen time $T>0$, in the specific case of a non-conservative \emph{thin-film} equation
\begin{align}\label{eq:1}
\left\{
     \begin{array}{l}
      \m u = -\nabla\cdot \left(m(u)\nabla\left[\Delta u-W'(u)\right]\right)\m t+\left(h(u)|\nabla u|^2+{\color{fu}g(u)}\right)\m t+\nabla\cdot \left(\sqrt{m(u)}\m \mathcal{W}\right), \\
     u(x,0)=u_0(x)
     \end{array}
     \right.
\end{align}
set on the spatial domain $D:=(0,2\pi)$, on some finite time domain $[0,T]$, and on a probability space $(\Omega,\mathcal{F},\mathbb{P})$. More precisely, we assume the existence of a sufficiently regular local solution to \eqref{eq:1} (i.e., defined up to a random time $\tau\leq T$) and we show that it can be extended up to $T$ while remaining positive with probability one.  
%
Above, $u_0\colon D\rightarrow [0,\infty)$ is a suitable positive initial datum, $\mathcal{W}$ is a noise white in time and coloured in space, $m$ is the mobility coefficient, and $W$, $h$ {\color{fu}and $g$} are given nonlinear source potentials.
These potentials compensate the noise contribution whenever the solution comes close to the singular regimes (these being identified by vanishing or diverging density); this is thoroughly discussed in Sections \ref{s:3} and \ref{s:4}. The precise nature of $\mathcal{W}$, $W$, $h$, $m$, and ${\color{fu}g}$ is stated in Subsection \ref{ss:10} below. We highlight that \eqref{eq:1} fits into the form prescribed by \eqref{eq:2000} with $F(u):=\int_{D}{\left\{|\nabla u(x)|^2/2+W(u(x))\right\}\m x}$ and $\Gamma(u):=h(u)|\nabla u|^2+{\color{fu}g(u)}$. 
\vspace{0.5 pc}\\
Existence of positive martingale solutions to \eqref{eq:1} has been established in the conservative case (${\color{fu}g}\equiv h \equiv 0$) in \cite{Fischer2018a}, for the case of quadratic mobility $m(u)=u^2$; this mobility results in a linear multiplicative stochastic noise. The case of general polynomial mobility, including the linear case $m(u)=u$ (corresponding to the noise $\mathscr{S}$ featured in the Dean-Kawasaki model), seems hard to study for the conservative thin-film equation, see \cite{Fischer2018a} again. This is why we analyse \eqref{eq:1} for a non-trivial drift component $\Gamma$. 
However, our drift component $\Gamma$ is not justified, as in the case of \cite{Renesse2009a,Andres2010a,Konarovskyi2018a,Konarovskyi2017a}, by the aforementioned Wasserstein geometry setting. Instead, it is needed in order to deal with algebraic cancellations arising from the It\^o calculus applied to relevant functionals of the solution, these functionals being primarily associated with positivity of the solution, which is our main interest here. 
We also stress the fact that we only pursue a purely analytical justification of our drift component $\Gamma$, and we consequently neglect any physical modelling at this stage. 

The paper is organised as follows. Subsection \ref{ss:10} contains basic assumptions on the functional setting, on the stochastic noise $\mathcal{W}$, as well as a parametrisation of interest for the relevant nonlinear quantities $m,W,h$, and $g$. 
Section \ref{s:2} contains the two main results of this paper, Proposition \ref{p:1} and Theorem \ref{thm:1}.
More specifically, Theorem \ref{thm:1} (which is also proved in this section) is concerned with positivity of solutions to \eqref{eq:1} up to time $T$, which is our main interest. Its proof builds upon Proposition \ref{p:1}, a technical result whose lengthy proof is the topic of Section \ref{s:3}. Sections \ref{s:4} compares the contents of this paper with the setting and conclusions of \cite{Fischer2018a}. Section \ref{s:5} illustrates the difficulties that one encounters when trying to prove existence of local solutions to \eqref{eq:1} via an approximating Galerkin scheme in the case of general mobility $m$, and also explains why such a scheme is effective in the specific case of quadratic mobility \cite{Fischer2018a}. We summarise our findings and conclusions in Section \ref{ss:6}.

\subsection{Setting and notation}\label{ss:10}
We work in a periodic function setting on $D:=(0,2\pi)$. The noise $\mathcal{W}$ is white in time and coloured in space. Its covariance operator $Q$ is diagonalisable on the eigenfunctions of the Laplace operator on $D$ with periodic boundary conditions. These eigenfunctions are given by the trigonometric family 
\begin{align*}
\left\{ e_r \right\}_{r=0}^{\infty}:= \left\{\frac{1}{\sqrt{2\pi}},\frac{\sin(x)}{\sqrt{\pi}},\frac{\cos(x)}{\sqrt{\pi}},\frac{\sin(2x)}{\sqrt{\pi}},\frac{\cos(2x)}{\sqrt{\pi}},\frac{\sin(3x)}{\sqrt{\pi}},\frac{\cos(3x)}{\sqrt{\pi}},\cdots\right\}.
\end{align*}
Using \cite[Proposition 2.1.10]{Prevot2007a}, we write the noise as 
$\mathcal{W}(t,x,\omega)=\sum_{r=0}^{\infty}{\sqrt{\lambda_r} e_r(x)\beta_r(t,\omega)}$, where 
$\{\lambda_r\}_{r=0}^{\infty}$ are the eigenvalues of $Q$ associated with $\{e_r\}_{r=0}^{\infty}$, and $\{\beta_r\}_{r=0}^{\infty}$ is a family of independent Brownian motions. 
We assume the eigenvalues of $Q$ to be rapidly decaying, say $\lambda_r\leq a_1e^{-a_2r}$, where $a_1,a_2>0$, for all $r\in\mathbb{N}_0$.\\
For some $\ep\in(0,1)$, let $A_0:=(0,1-\ep)$, $A_{1}:=[1-\ep,1+\ep]$, $A_{\infty}:=(1+\ep,\infty)$. The mobility $m$ and the functions $h$ and ${\color{fu}g}$ are given by
\begin{align}\label{eq:30}
m(u) :=
\left\{
     \begin{array}{ll}
      \! u^{\gamma_1}, & \mbox{ if } u\in A_0, \\
       \! f_m(u), & \mbox{ if } u\in A_1, \\
       \! u^{\gamma_2}, & \mbox{ if } u\in A_{\infty}, 
     \end{array}
     \right.
     \,\,
h(u):=
\left\{
     \begin{array}{ll}
      \! B_hu^{-p_h}, & \mbox{ if } u\in A_0, \\
      \! f_h(u), & \mbox{ if } u\in A_1, \\
      \! -B_hu^{c_h}, & \mbox{ if } u\in A_{\infty}, \\
     \end{array}
     \right.
     \,\,    
g(u):=
\left\{
     \begin{array}{ll}
      \! B_gu^{-p_g}, & \mbox{ if } u\in A_0, \\
      \! f_g(u), & \mbox{ if } u\in A_1, \\
      \! -B_gu^{c_g}, & \mbox{ if } u\in A_{\infty}, 
     \end{array}
     \right.
\end{align}
while $W$ is given by $W(u)=u^{-p}$. The functions $m,h,g,$ and $W$ are understood to be infinite when $u\leq 0$. In the above, $p,B_h,p_h,c_h,{\color{fu}B_g,p_g,c_g},\gamma_1$, and $\gamma_2$ are positive constants, while the functions $f_h,f_g$, and $f_m$ are such that $W,h,{\color{fu}g}$, and $m$ belong to $\mathcal{C}^{\infty}(0,\infty)$. It is easy to choose $f_h$ and $f_m$ such that, for some $\delta>0$\begin{subequations}
\label{eq:1000}
\begin{empheq}[left={}]{align}
  \,\,f_m(u)> \delta , & \quad\mbox{for all } u\in A_1,\label{eq:1000c}\\
  \,\,f_h'(u)\leq -\delta B_h, & \quad\mbox{for all } u\in A_1.\label{eq:1000b}
\end{empheq}
\end{subequations}
The potentials $W$, $h$, and the mobility $m$ are sketched in Figure \ref{fig:1}, {\color{fu}while the potential $g$ is not sketched (as it is qualitatively identical to $h$}). We defined $h,{\color{fu}g}$ and $m$ piecewise on $A_0$ and $A_\infty$ in order to be able to treat low and large density regimes differently. The definitions on $A_1$ provide smoothness on $(0,\infty)$ for the quantities in \eqref{eq:30}.
Our definitions of $W$, $h$, $g$, and $m$ are justified as follows: the potential $W$ pushes mass away from the repulsive singularity $0$, while obeying the conservation of mass. The source potentials $h$ and ${\color{fu}g}$ introduce mass in the system whenever the density is too low, and remove mass whenever the density is too large. {\color{fu}In the case of $h$}, the rate at which the introduction/removal of mass occurs is proportional to $|\nabla u|^2$. 
The mobility accounts for different drift and noise magnitudes in the low and large density regimes. 
\begin{figure}[h]
\begin{center}
\framebox{\includegraphics[width=5.5 cm,height=4.5 cm]{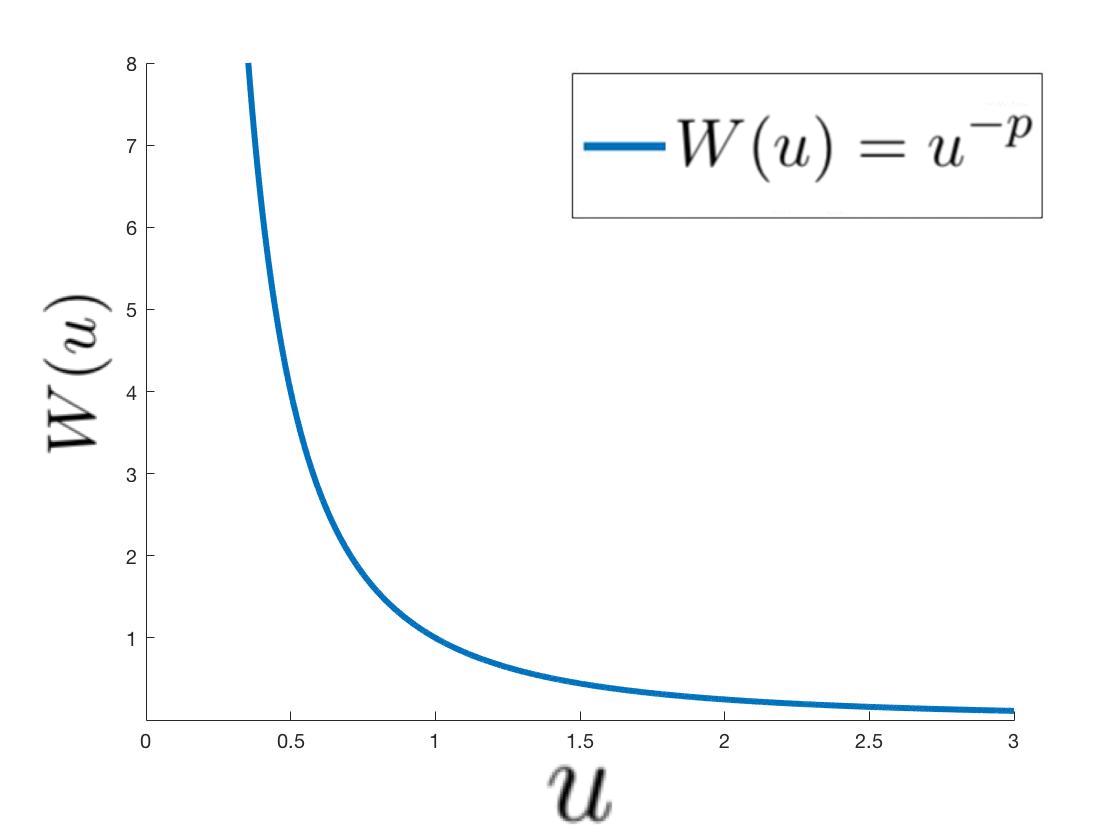}}\hspace{0.5 pc}\framebox{\includegraphics[width=5.5 cm,height=4.5 cm]{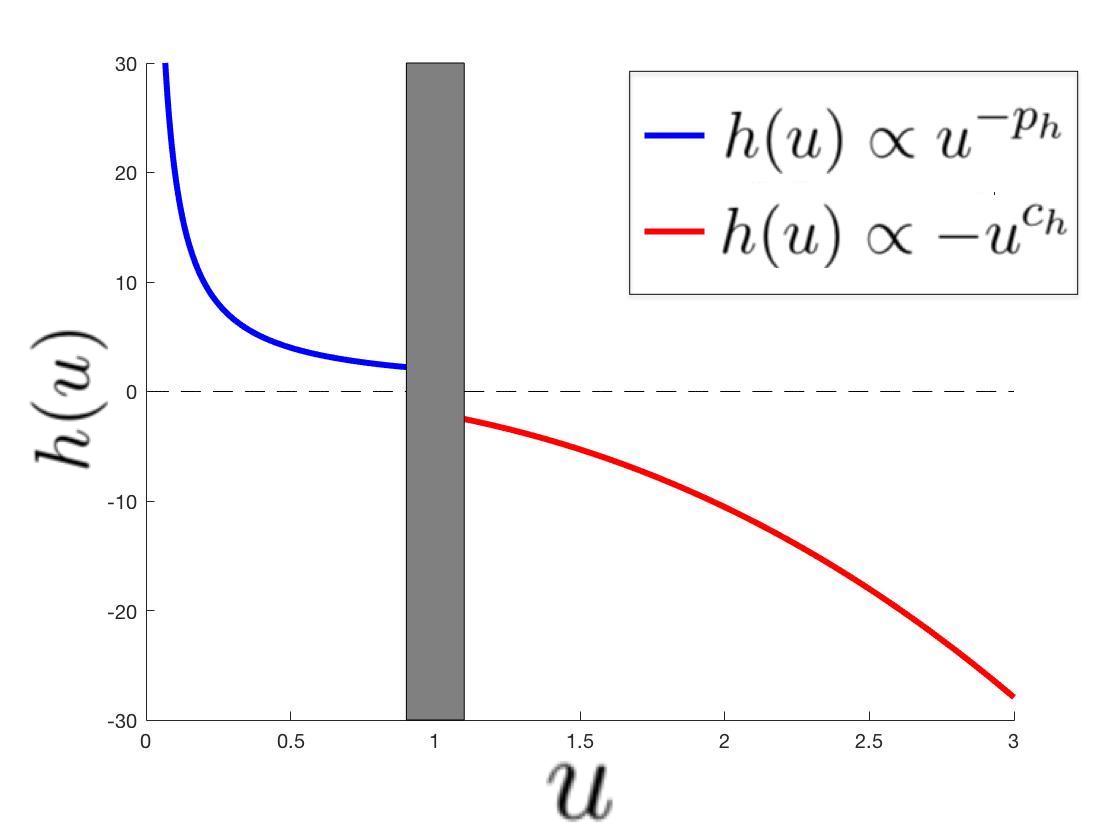}}\hspace{0.5 pc}\framebox{\includegraphics[width=5.5 cm,height=4.5 cm]{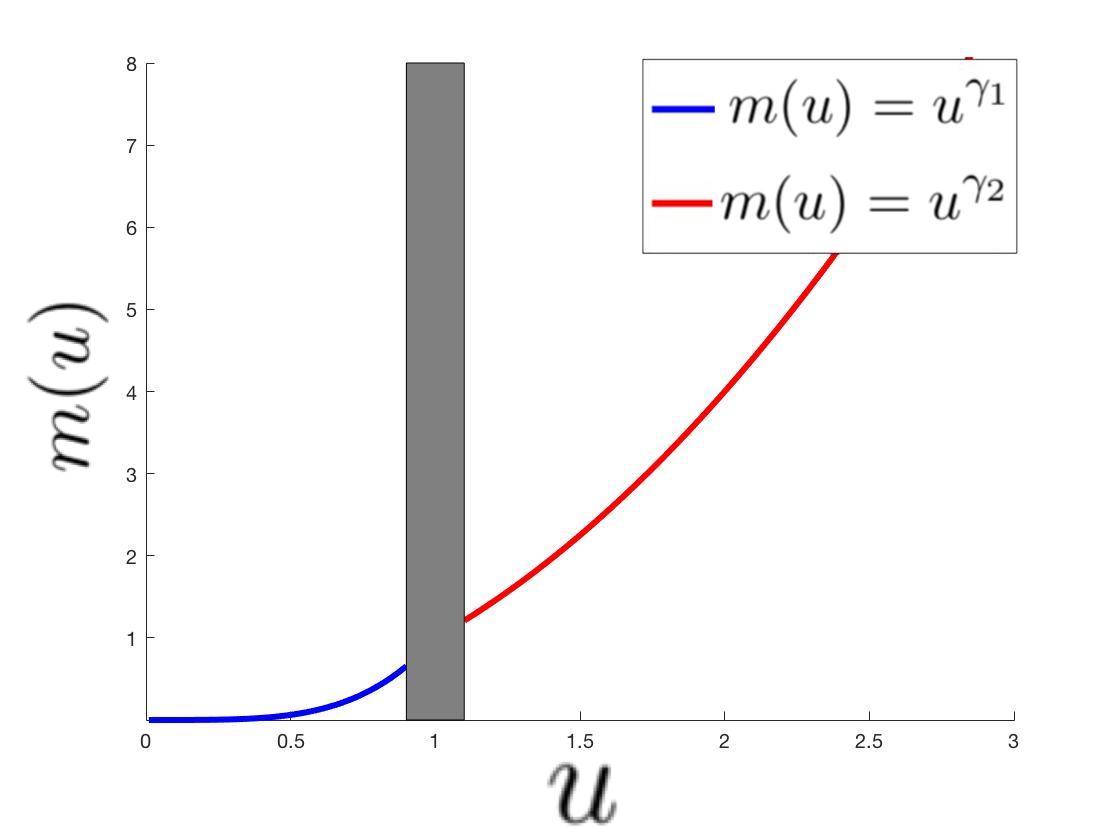}}
\caption{Sketches of $W$ (left), $h$ (centre), and $m$ (right). Plots on $A_1$ are not provided for $h$ and $m$. {\color{fu}The qualitative behaviour of $g$ is identical to that of $h$.}}
\label{fig:1}
\end{center}
\end{figure}


We use the symbol $L^p$ to denote the space $L^{p}(D)$. We use the symbol $W^{s,p}$ to denote the Sobolev space $W_{\mbox{\scriptsize{per}}}^{s,p}(D)$ of $2\pi$-periodic functions on $\mathbb{R}$ having distributional derivates up to order $s$ belonging to $L^p$. We abbreviate $H^{s}:=W^{s,2}$.
For a Hilbert space $V$, we use $\la\cdot,\cdot\ra_V$ and $\|\cdot\|_V$ to denote the $V$-inner product and $V$-norm, respectively. We drop the subscript if $V=L^2$. For a function $u$ depending on space and time, we often write $u(t)$ instead of $u(x,t)$, and we indifferently use the notations $u_x$ and $\nabla u$ to refer to spatial differentiation. Finally, $C$ denotes a generic constant whose value may change from line to line; the dependency of this constant on specific parameters is highlighted whenever relevant. 

\section{A priori positivity of solutions}\label{s:2}

Let $T>0$. We show that, if we assume the existence of a sufficiently regular solution to \eqref{eq:1} up to a random time $\tau\leq T$, this solution can be extended up to $T$ and is positive $\mathbb{P}$-a.s. In order to do so, we need the following auxiliary result.
\begin{prop}\label{p:1}
Fix $T>0$ and $\beta>2$. Consider an initial datum $u_0\in H^1$ such that $\delta_1<\min_{x\in D}{u_0(x)}$ and $\|u_0\|_{H^1}< \delta_2$, for some $\delta_2>\delta_1>0$, $\mathbb{P}$-a.s. Assume the existence of a strong solution $u$ to \eqref{eq:1} up to a random time $\tau\leq T$. More precisely, we assume the equation below to be satisfied $\mathbb{P}$-a.s., for all $t>0$
\begin{align}\label{eq:1000}
u(t\wedge \tau) & = u_0+\int_{0}^{t\wedge \tau}{\left[-\nabla\cdot \left(m(u)\nabla\left[\Delta u-W'(u)\right]\right)+\left(h(u)|\nabla u|^2+{\color{fu}g(u)}\right)\right]\emph{\m} s} \nonumber\\
 & \quad + \int_{0}^{t\wedge \tau}{\nabla\cdot\left(\sqrt{m(u)}(\cdot)\right)\emph{\m}\mathcal{W}}.
\end{align}
We assume that $u$ has $\mathbb{P}$-a.s. continuous paths with respect to the $H^1$-norm, and that
\begin{align}\label{eq:3002}
\mathbb{P}\left(\int_{0}^{\tau}{\!\|\nabla u(s)\|^4_{L^4}\emph{\m}s}<\infty\right)=1,\quad\!\mathbb{P}\left(\int_{0}^{\tau}{\!\|\Delta u(s)\|^2\emph{\m}s}<\infty\right)=1,\quad\!\mean{\int_{0}^{\tau}{\!\left\|u_{xxx}(s)\sqrt{m(u(s))}\right\|^2\!\emph{\m} s}}<\infty.
\end{align}
For all $n\in\mathbb{N}$ such that $n^{-1}<\delta_1$ and $n>\delta_2$, we assume $\tau_n \leq \tau\leq T$, where the stopping time $\tau_n$ is given by
\begin{align}\label{eq:51}
\tau_n:=\inf\left\{t>0:\min_{x\in D}{u(x,t)}\leq n^{-1}\right\}\wedge\inf\left\{t>0:\|u(t)\|_{H^1}\geq n\right\}\wedge T.
\end{align} 
Assume the following conditions
\begin{align}
\tag{C1}\label{eq:43}
& \sum_{r=0}^{\infty}{\lambda_r}\mbox{ \emph{is small enough}},\\
\tag{C2}\label{eq:44}
& p_h,B_h,c_h\mbox{ \emph{are big enough}},\\
\tag{C3}\label{eq:45}
& {\color{fu}p_g,B_g,c_g\mbox{ \emph{are big enough}}}.
\end{align}
Let $F_1\colon H^1\rightarrow \mathbb{R}\cup\{\infty\}\colon u\mapsto \int_{D}{|u|^{-\beta}}$, let $F_2\colon H^1\rightarrow \mathbb{R}\colon u\mapsto \frac{1}{2}\|u\|^2_{H^1}$, and let $F:=F_1+F_2$. There is a constant $C$ independent of $n$, such that 
\begin{align}\label{eq:50}
\mean{F(u(t\wedge \tau_n))} \leq C,\qquad \mbox{for all }t\in[0,T].
\end{align}
\end{prop}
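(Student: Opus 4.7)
The plan is to apply It\^{o}'s formula to $F(u(\cdot\wedge\tau_n))=F_1(u(\cdot\wedge\tau_n))+F_2(u(\cdot\wedge\tau_n))$ on $[0,t]$, show that all drift and It\^{o}-correction terms are either already dissipative (coming from the gradient-flow structure) or absorbed by the strongly dissipative source contributions (which diverge precisely on the singular regimes $u\to 0^+$ and $u\to\infty$), and then close with Gronwall. The stopping time $\tau_n$ in \eqref{eq:51} guarantees $1/n\leq u$ and $\|u\|_{H^1}\leq n$ on $[0,\tau_n]$, while the three integrability statements in \eqref{eq:3002} ensure that every integral below is well-defined and every stochastic integral is a true martingale on $[0,\tau_n]$.

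For $F_1(u)=\int_D u^{-\beta}\,\m x$ (so $F_1'(u)=-\beta u^{-\beta-1}$, $F_1''(u)=\beta(\beta+1)u^{-\beta-2}$), I would integrate the gradient-flow drift by parts (periodicity kills boundary terms) to extract a strongly dissipative term $-\beta(\beta+1)p(p+1)\int u^{\gamma_1-\beta-p-4}u_x^2$ on $A_0$ (from $W''(u)=p(p+1)u^{-p-2}$), together with a $u_{xxx}$ cross-term that Young splits into $\tfrac{1}{2}\|\sqrt{m(u)}u_{xxx}\|^2$ (time-integrable by \eqref{eq:3002}) plus a residual $C\int u^{\gamma_1-2\beta-4}u_x^2$ on $A_0$. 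The source drift contributes $-\beta B_h\int u^{-\beta-p_h-1}u_x^2-\beta B_g\int u^{-\beta-p_g-1}$ on $A_0$, both strongly negative. Expanding $\partial_x(\sqrt{m(u)}e_r)=\tfrac{m'(u)}{2\sqrt{m(u)}}u_x e_r+\sqrt{m(u)}(e_r)_x$ and using $\|e_r\|_\infty\leq C$, $\|(e_r)_x\|_\infty\leq Cr$ together with the rapid decay $\lambda_r\leq a_1 e^{-a_2 r}$, the It\^{o} correction collapses (on $A_0$) to $C(\sum_r\lambda_r)\int u^{\gamma_1-\beta-4}u_x^2+C(\sum_r\lambda_r r^2)\int u^{\gamma_1-\beta-2}$. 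Since $u<1-\ep$ on $A_0$, picking $p_h,p_g$ large enough so that $-\beta-p_h-1\leq\min(\gamma_1-\beta-4,\gamma_1-2\beta-4)$ and $-\beta-p_g-1\leq\gamma_1-\beta-2$ makes the source $u$-exponents dominant; then large $B_h,B_g$ (C2, C3) and small $\sum_r\lambda_r$ (C1) absorb the residual coefficients. On $A_\infty$ all source contributions are negative and large in magnitude, so the analogous absorption works; on the compact interval $A_1$ all quantities are bounded and their contributions fold into the Gronwall constant via \eqref{eq:1000c}--\eqref{eq:1000b}.

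For $F_2(u)=\tfrac12\|u\|^2+\tfrac12\|u_x\|^2=F_2^{(1)}+F_2^{(2)}$, I would use $du_x=\partial_x\,du$, so that $dF_2^{(2)}=-\int u_{xx}\,du+\tfrac12\int d[u_x,u_x]$. Two integrations by parts of the gradient-flow drift then yield the strongly dissipative $-\int m(u)u_{xx}^2$ (from $F_2^{(1)}$) and $-\int m(u)u_{xxx}^2$ (from $F_2^{(2)}$), the non-positive $-\int m(u)W''(u)u_x^2$, plus a $W''(u)u_x u_{xxx}$ cross-term absorbed by Young against $m(u)u_{xxx}^2$. Crucially, the source contributions simplify by IBP to $\tfrac{1}{3}\int h'(u)u_x^4+\int g'(u)u_x^2$, both non-positive on $A_0\cup A_\infty$ by \eqref{eq:30}, and bounded on $A_1$. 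The It\^{o} correction $\tfrac12\sum_r\lambda_r\int[\partial_{xx}(\sqrt{m(u)}e_r)]^2$ expands into monomials in $u_x$ and $u_{xx}$ with $u$-dependent weights; each is dominated by some combination of $\int m(u)u_{xxx}^2$, $\int m(u)u_{xx}^2$, or the $F_1$/$F_2$ source dissipations above, by the same exponent-matching and coefficient-absorption logic under (C1)--(C3).

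Summing the $F_1$ and $F_2$ estimates, taking expectation (all stochastic integrals are true martingales on $[0,\tau_n]$), and setting $\phi_n(t):=\mathbb{E}[F(u(t\wedge\tau_n))]$, I expect an inequality $\phi_n(t)\leq F(u_0)+CT+C\int_0^t\phi_n(s)\,\m s$ with $C$ independent of $n$. Gronwall then yields $\phi_n(t)\leq(F(u_0)+CT)e^{CT}$ uniformly in $n$ and $t\in[0,T]$, which is precisely \eqref{eq:50}. The main obstacle is the bookkeeping: each It\^{o}-correction cross-term has to be matched with a specific source-potential or gradient-flow dissipation, and one must verify that the resulting system of exponent inequalities admits a common parameter choice. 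The algebra forces an ordering --- first $p_h,p_g,c_h,c_g$ large enough to dominate the $u$-exponents, then $B_h,B_g$ large enough to dominate the residual coefficients, and finally $\sum_r\lambda_r$ small enough to beat the remaining constants --- which is exactly what conditions (C1)--(C3) encode.
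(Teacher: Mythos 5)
Your proposal is correct and follows essentially the same strategy as the paper: apply It\^o's formula to $F=F_1+F_2$ up to $\tau_n$, extract the gradient-flow and source dissipation, organize the residual drift and It\^o-correction terms by their $u$-monomials, tune the parameters via (C1)--(C3), and close with Gronwall after taking expectations. The one genuine technical divergence is in how the third-order cross-terms are handled. You Young-split $\la u_{xxx}, m(u)u^{-\beta-2}\nabla u\ra$ and $\la m(u)W''(u)\nabla u, u_{xxx}\ra$ against $\|\sqrt{m(u)}u_{xxx}\|^2$, relying either on the $-\|\sqrt{m(u)}u_{xxx}\|^2$ dissipation from $F_2^{(2)}$ or on the third integrability assumption in \eqref{eq:3002} to absorb the positive halves. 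The paper instead integrates by parts once more and systematically invokes the identity $\la f(u)|\nabla u|^2,\Delta u\ra = -\tfrac{1}{3}\la f'(u)|\nabla u|^2,|\nabla u|^2\ra$, which converts these cross-terms exactly into $\Delta u$- and $|\nabla u|^4$-quadratic contributions; this avoids ever introducing a positive $u_{xxx}$-quadratic term and simultaneously extracts the extra dissipation $-\beta(\beta+1)\la m(u)u^{-\beta-2}\Delta u,\Delta u\ra$, somewhat loosening the constraints on $p_h,B_h$. Both routes close under (C1)--(C3). One small omission in your sketch: the first integration by parts in the $F_2^{(1)}$-drift produces, besides $-\int m(u)u_{xx}^2$, the cross-term $-\la m'(u)|\nabla u|^2,\Delta u\ra = \tfrac13\la m''(u)|\nabla u|^2,|\nabla u|^2\ra$, a $|\nabla u|^4$-family contribution with $u$-exponent $\gamma_1-2$ (resp.\ $\gamma_2-2$) which needs to be dominated by $h'$ via (C2) just like the others; it should appear in your list. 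Your exponent-matching and coefficient-absorption logic on $A_0\cup A_\infty$, and the observation that $A_1$ yields only bounded constants via \eqref{eq:1000c}--\eqref{eq:1000b}, are precisely what the paper encodes in its ``family'' clustering and $\mathcal{A}_0$/$\mathcal{A}_\infty$ summary tables of Subsections \ref{ss:2}--\ref{ss:4}.
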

The proof of Proposition \ref{p:1}, which is quite lengthy and technical, is the content of Section \ref{s:3}. Our main result, which relies on Proposition \ref{p:1}, is the following.
\begin{theorem}\label{thm:1}
Let the assumptions of Proposition \ref{p:1} be satisfied. Then the solution $u$ to \eqref{eq:1000} is defined up to time $T$ and is $\mathbb{P}$-a.s. positive, meaning that
\begin{align*}
\mathbb{P}\left(u(x,t)>0\mbox{ for all }x\mbox{ in }D\mbox{ and for all } t\in[0,T]\right)=1.
\end{align*}
\end{theorem}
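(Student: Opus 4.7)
The plan is to combine the uniform bound from Proposition \ref{p:1} with Markov's inequality to show that the stopping times $\tau_n$ converge to $T$ almost surely, then read off both extended existence and positivity from this convergence. Monotonicity in $n$ is immediate: if $m>n$, the sublevel sets $\{\min u\leq 1/m\}$ and $\{\|u\|_{H^1}\geq m\}$ shrink, so $\tau_m\geq\tau_n$; hence the limit $\tau_\infty:=\lim_n\tau_n$ exists $\mathbb{P}$-a.s. and satisfies $\tau_\infty\leq T$.

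First I would fix $\beta$ in Proposition \ref{p:1} large enough, say $\beta>4$ (only $\beta>2$ is imposed in the proposition, but a larger choice is harmless and provides the room needed in the next step). The proposition then yields a constant $C$, independent of $n$, such that $\mean{F(u(t\wedge\tau_n))}\leq C$ for all $t\in[0,T]$; taking $t=T$ gives $\mean{F(u(\tau_n))}\leq C$.

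The core step is the quantitative lower bound $F(u(\tau_n))\geq M_n$ on $\{\tau_n<T\}$, with $M_n\to\infty$. Thanks to the assumed $H^1$-continuity of paths, on $\{\tau_n<T\}$ either $\|u(\tau_n)\|_{H^1}=n$, whence $F(u(\tau_n))\geq F_2(u(\tau_n))=n^2/2$, or else $\min_{x\in D}u(\tau_n,x)=1/n$ with $\|u(\tau_n)\|_{H^1}\leq n$. In the second case, writing $x^\ast$ for a minimizer and invoking the one-dimensional Sobolev embedding $H^1(D)\hookrightarrow C^{1/2}(\overline D)$ yields $u(\tau_n,x)\leq 1/n+n|x-x^\ast|^{1/2}$; an explicit integration then gives
\begin{align*}
F_1(u(\tau_n))\geq \int_{-\pi}^{\pi}\!\bigl(1/n+n|r|^{1/2}\bigr)^{-\beta}\,\m r \;\geq\; c_\beta\, n^{\beta-4},
\end{align*}
with $c_\beta>0$ depending only on $\beta$. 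Since $\beta>4$, both alternatives give $M_n\to\infty$, and Markov's inequality produces $\mathbb{P}(\tau_n<T)\leq C/M_n\to 0$. Monotonicity of the events $\{\tau_n<T\}$ in $n$ then yields $\mathbb{P}(\tau_\infty<T)=0$.

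To conclude, the hypothesis of Proposition \ref{p:1} forces $\tau_n\leq\tau\leq T$ for $n$ large, so $\tau=T$ $\mathbb{P}$-a.s.\ and the solution extends to $[0,T]$. For positivity, observe that $\{\tau_n=T\}\subseteq\{u\geq 1/n\text{ on }[0,T]\times D\}$ and that $\bigcup_n\{\tau_n=T\}$ has full probability, so $u>0$ on $[0,T]\times D$ $\mathbb{P}$-a.s. The main obstacle is the lower bound on $F_1$ in the minimum-reaching case: only the 1D Sobolev regularity $C^{1/2}$ is available on $H^1$-paths, and it is the interplay between this H\"older exponent $1/2$ and the power $\beta$ that forces one to apply Proposition \ref{p:1} with $\beta>4$ rather than a mere $\beta>2$.
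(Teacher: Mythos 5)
Your proposal is correct, but it follows a genuinely different route from the paper's. The paper first transforms the unknown via $\theta:=\beta/2-1>0$ and bounds $\mean{\|u^{-\theta}(t\wedge\tau_n)\|_{W^{1,1}}}$ uniformly in $n$ (a short H\"older computation makes $\|u^{-\theta}\|_{W^{1,1}}\leq C+CF(u)$ possible because the exponent $2(\theta+1)$ equals exactly $\beta$); it then applies the embedding $W^{1,1}\hookrightarrow C(\overline D)$ to convert the bound on $\min u$ into a bound on this $W^{1,1}$ norm, and concludes by Chebyshev. Your version instead stays at the level of $F$ itself: on $\{\tau_n<T\}$ you produce a deterministic lower bound $F(u(\tau_n))\geq M_n\to\infty$ by combining $F_2\geq n^2/2$ with a pointwise H\"older estimate $u(\tau_n,\cdot)\lesssim 1/n+n|\,\cdot-x^\ast|^{1/2}$ and an explicit integration, and then apply Markov. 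Both are valid, and yours is arguably more hands-on. The tradeoff is that the paper's $u^{-\theta}$ trick works for any $\beta>2$, whereas the scaling of your integral ($\int(1/n+n|r|^{1/2})^{-\beta}\m r\sim n^{\beta-4}$) forces $\beta>4$. This is not a gap provided you are explicit that you first choose $\beta>4$ \emph{and then} tune $p,B_h,p_h,c_h,B_g,p_g,c_g,\gamma_1,\gamma_2,\{\lambda_r\}$ so that conditions (C1)--(C3) of Proposition~\ref{p:1} hold for that $\beta$; the conditions are $\beta$-dependent, so ``a larger choice is harmless'' deserves one sentence of justification rather than being asserted. With that caveat, the proof is sound.

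Two small cosmetic points: the dichotomy on $\{\tau_n<T\}$ should be stated as ``either $\|u(\tau_n)\|_{H^1}=n$, or $\min u(\tau_n)=n^{-1}$ with $\|u(\tau_n)\|_{H^1}\leq n$'' (continuity, together with the assumption $\delta_1<\min u_0$, $\|u_0\|_{H^1}<\delta_2$, and $n^{-1}<\delta_1<\delta_2<n$, rules out the boundary being reached at $t=0$); and $|x-x^\ast|$ should be read as the distance on the periodic domain, which your centred integral $\int_{-\pi}^{\pi}$ correctly does.
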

\begin{proof}
Define $\theta:=\frac{\beta}{2}-1>0$. The H\"older inequality and the bound $u^{-\theta}\leq u^{-\beta}+1$, valid on $(0,\infty)$, give
\begin{align*}
\|u^{-\theta}(t\wedge\tau_n)\|_{W^{1,1}} & =\int_{D}{|u^{-\theta}(t\wedge\tau_n)|\m x}+\theta\int_{D}{|u^{-\theta-1}(t\wedge\tau_n)\nabla u(t\wedge\tau_n)|\m x}\nonumber\\
& \leq\int_{D}{|u^{-\theta}(t\wedge\tau_n)|\m x}+\theta\left(\int_{D}{|u^{-2(\theta+1)}(t\wedge\tau_n)|\m x}\right)^{1/2}\left(\int_{D}{|\nabla u(t\wedge\tau_n)|^2\m x}\right)^{1/2}\nonumber\\
& \leq C+C\int_{D}{|u^{-\beta}(t\wedge\tau_n)|\m x} + C\|u(t\wedge\tau_n)\|^2_{H^1}\leq C + CF(u(t\wedge\tau_n)).
\end{align*}
This immediately entails, using Proposition \ref{p:1}, that
\begin{align}\label{eq:1001}
\mean{\|u^{-\theta}(t\wedge\tau_n)\|_{W^{1,1}}}\leq C,\qquad \mbox{for all }t\in[0,T],
\end{align}
where $C$ is independent of $n$. Let $t\in[0,T]$. 
We use the $\mathbb{P}$-a.s. $H^1$-continuity of the paths of $u$, the continuous embedding $W^{1,1}\hookrightarrow C(0,2\pi)$ 
(with embedding constant $K_1$), the Chebyshev inequality, and equations \eqref{eq:50} and \eqref{eq:1001} to deduce
\begin{align*}
\mathbb{P}\left(\tau_n<t\right) & \leq \mathbb{P}\left(\min_{x\in D}{|u(t\wedge\tau_n)|}\leq n^{-1}\right) + \mathbb{P}\left(\|u(t\wedge\tau_n)\|_{H^1}\geq n\right) = \mathbb{P}\left(\max_{x\in D}{|u(t\wedge\tau_n)|^{-\theta}}\geq n^{\theta}\right)\\
& \quad + \mathbb{P}\left(\|u(t\wedge\tau_n)\|^2_{H^1}\geq n^2\right) \leq \mathbb{P}\left(\|u^{-\theta}(t\wedge \tau_n)\|_{W^{1,1}}\geq K_1^{-1}n^{\theta}\right) + \mathbb{P}\left(\|u(t\wedge\tau_n)\|^2_{H^1}\geq n^2\right)\\
& \leq \frac{\mean{\|u^{-\theta}(t\wedge \tau_n)\|_{W^{1,1}}}}{K_1^{-1}n^{\theta}} + \frac{\mean{\|u(t\wedge \tau_n)\|^2_{H^1}}}{n^2} \rightarrow 0
\end{align*}
as $n\rightarrow 0$. This implies that $\mathbb{P}\left(\sup_{n}{\tau_n}=T\right)=1$, and concludes the proof.
\end{proof}

\section{Proof of Proposition \ref{p:1}}\label{s:3}
We split the proof in four parts. In Subsection \ref{ss:1}, we compute and properly bound the It\^o differential of the process $F(u)$ up to time $t\wedge\tau_n$, for any $t\in[0,T]$. In Subsection \ref{ss:2}, we group all the terms from the previously computed It\^o differential into families, each family being characterised by a specific term. Subsections \ref{ss:3} and \ref{ss:4} are concerned with imposing conditions on the parameters $p,B_h,p_h,c_h,{\color{fu}B_g,p_g,c_g},\gamma_1,\gamma_2$, and $\{\lambda_r\}_{r=0}^{\infty}$ in such a way that \eqref{eq:50} is achieved; more specifically, Subsection \ref{ss:3} provides the relevant analysis on $A_0\cup A_{\infty}$, while Subsection \ref{ss:4} consistently extends this analysis on to $A_1$. \vspace{0.5 pc}\\
For notational convenience, we rewrite \eqref{eq:1000} as 
$
\m u = \phi(u(t))\m t + \Phi(u(t))\m \mathcal{W}(t),
$
where 
\begin{align*}
\phi(u)=\phi_1(u)+\phi_2(u)+\phi_3(u) & := -\nabla\cdot \left(m(u)\nabla\left[\Delta u-W'(u)\right]\right)+h(u)|\nabla u|^2+{\color{fu}g(u)},\\
\Phi(u)v & := \nabla\cdot\left(\sqrt{m(u)}v\right).
\end{align*}
Integration by parts entails that the component of the stochastic noise of \eqref{eq:1000} along the direction $e_i$, for $i\in\mathbb{N}_0$, is 
\begin{align*}
& \la \int_{0}^{t}{\Phi(u(s))\m \mathcal{W}(s)}, e_i\ra = \la \int_{0}^{t}{\nabla\cdot\left(\sqrt{m(u(s))}\sum_{r=0}^{\infty}{\sqrt{\lambda_r} e_r\m\beta_r(s)}\right)}, e_i\ra \\
& = -\la \int_{0}^{t}{\sqrt{m(u(s))}\sum_{r=0}^{\infty}{\sqrt{\lambda_r} e_r\m\beta_r(s)}}, \nabla e_i\ra = \sum_{r=0}^{\infty}{\int_{0}^{t}{-\la \sqrt{m(u(s))}e_r,\nabla e_i\ra\sqrt{\lambda_r}\m\beta_r(s)}}.
\end{align*}
Thus $\Phi$ can be thought of as an infinite-dimensional noise represented with components given by
\begin{align}\label{eq:9}
\Phi_{i,r}(u(s)):=-\la \sqrt{m(u(s))} e_r,\nabla e_i\ra,\qquad \mbox{for all }i,r\in\mathbb{N}_0.
\end{align}

\subsection{It\^o formula for $F(u(t\wedge\tau_n))$}\label{ss:1}


We use the It\^o formula 
\begin{align}\label{eq:4}
G(u(t\wedge\tau_n)) & = G(u(0))+\int_{0}^{t\wedge\tau_n}{\!G_{u}(u(s))\phi(u(s))\m s} + \int_{0}^{t\wedge\tau_n}{\frac{1}{2}\mbox{Tr}\!\left[G_{uu}(u(s))(\Phi(u(s)) Q^{\frac{1}{2}})(\Phi(u(s))Q^{\frac{1}{2}})^T\right]\!\m s}\nonumber\\
& \quad + \int_{0}^{t\wedge\tau_n}{G_{u}(u(s))\Phi(u(s))\m \mathcal{W}(s)}+=:I_1+I_2+I_3+I_4, 
\end{align}
here stated for a real-valued functional $G$ applied to the solution $u$. We can apply \eqref{eq:4} to $G=F_1$ and $G=F_2$ because, up to time $t\wedge\tau_n$, they are both uniformly continuous (along with their first and second derivatives) over bounded sets of $H^1$. We analyse terms $I_2$, $I_3$, and $I_4$ of \eqref{eq:4} for $G=F_1$ and $G=F_2$. Time dependence is often dropped for notational convenience.\vspace{0.5 pc}\\
\emph{Term $I_2$ for $G=F_1$}. 
The first and second derivatives of $F_1$ are $F_{1,u}(u)v=-\beta\int_{D}{u^{-\beta-1}v\m x}$ and $F_{1,uu}(u)(v_1,v_2)=\beta(\beta+1)\int_{D}{u^{-\beta-2}v_1v_2\m x}$. 
We study the contributions of $\phi_1$, $\phi_2$, and {\color{fu}$\phi_3$} on $F_{1,u}(u)\phi(u)$ separately. We obtain
\begin{align*}
 F_{1,u}(u)\phi_1(u) & = \la-\nabla\cdot \left(m(u)\nabla\left[\Delta u-W'(u)\right]\right), -\beta u^{-\beta-1}\ra = \beta(\beta+1)\la m(u)\nabla[\Delta u-W'(u)],u^{-\beta-2}\nabla u\ra\\
& = \beta(\beta+1)\la \nabla[\Delta u-W'(u)],m(u)u^{-\beta-2}\nabla u\ra=-\beta(\beta+1)\la \Delta u,\nabla(m(u)u^{-\beta-2}\nabla u)\ra +\\
& \quad \quad -\beta(\beta+1)\la W''(u)\nabla u,m(u)u^{-\beta-2}\nabla u\ra\\
& = -\beta(\beta+1)\la \Delta u, m(u)u^{-\beta-2}\Delta u\ra - \beta(\beta+1)\la \Delta u,(m(u)u^{-\beta-2})'|\nabla u|^2 \ra\\
& \quad \quad -\beta(\beta+1)\la W''(u)\nabla u,m(u)u^{-\beta-2}\nabla u\ra.
\end{align*}
We remind the reader of the identity
\begin{align}\label{eq:400}
\la f(u)|\nabla u|^2,\Delta u\ra = -\frac{1}{3}\la f'(u)|\nabla u|^2,|\nabla u|^2\ra,
\end{align}
which is valid for $f\in\mathcal{C}^1(0,\infty)$. We choose $f(u):=(m(u)u^{-\beta-2})'$ and deduce
\begin{align}\label{eq:5}
F_{1,u}(u)\phi_1(u) &= -\beta(\beta+1)\la \Delta u, m(u)u^{-\beta-2}\Delta u\ra\nonumber\\
& \quad + \frac{\beta(\beta+1)}{3}\la (m(u)u^{-\beta-2})''|\nabla u|^2,|\nabla u|^2\ra -\beta(\beta+1)\la W''(u)\nabla u,m(u)u^{-\beta-2}\nabla u\ra.
\end{align}
As for $\phi_2$ {\color{fu}and $\phi_3$}, the contributions are simply 
\begin{align}\label{eq:6}
F_{1,u}(u)\phi_2(u) = \la h(u)|\nabla u|^2,-\beta u^{-\beta-1}\ra,\qquad{\color{fu}F_{1,u}(u)\phi_3(u)} = {\color{fu} \la g(u),-\beta u^{-\beta-1}\ra.}
\end{align}
\emph{Term $I_2$ for $G=F_2$}. The first and second derivatives of $F_2$ are $F_{2,u}(u)v=\la u,v\ra_{H^1}$ and $F_{2,uu}(u)(v_1,v_2)=\la v_1,v_2 \ra_{H^1}$. We study the contributions of $\phi_1$, $\phi_2$, {\color{fu}and $\phi_3$} on $F_{2,u}(u)\phi(u)$ separately. We set $f(u):=m(u)W''(u)$ and we obtain, by relying on \eqref{eq:400} and using integration by parts
\begin{align}\label{eq:21}
& F_{2,u}(u)\phi_1(u) \nonumber\\
& \quad = \la -\nabla\cdot \left(m(u)\nabla\left[\Delta u-W'(u)\right]\right),u\ra_{H^1}=\la -\nabla\cdot \left(m(u)\nabla\left[\Delta u-W'(u)\right]\right),u\ra\nonumber\\
&  \quad\quad +\la \nabla(-\nabla\cdot \left(m(u)\nabla\left[\Delta u-W'(u)\right]\right)),\nabla u\ra\nonumber\\
&  \quad= \la m(u)\nabla[\Delta u-W'(u)],\nabla u\ra+\la \nabla\cdot(m(u)\nabla[\Delta u-W'(u)]),\Delta u\ra\nonumber\\
&  \quad= \la \nabla[\Delta u-W'(u)],m(u)\nabla u\ra-\la m(u)\nabla[\Delta u-W'(u)],u_{xxx}\ra\nonumber\\
&  \quad= -\la \Delta u,m'(u)|\nabla u|^2\ra-\la \Delta u, m(u)\Delta u\ra-\la W''(u)\nabla u,m(u)\nabla u\ra-\la m(u)u_{xxx},u_{xxx} \ra + \la f(u)\nabla u,u_{xxx} \ra\nonumber\\
&  \quad= \frac{1}{3}\la m''(u)|\nabla u|^2,|\nabla u|^2\ra-\la \Delta u, m(u)\Delta u\ra-\la W''(u)\nabla u,m(u)\nabla u\ra\nonumber\\
&  \quad\quad -\la m(u)u_{xxx},u_{xxx} \ra-\la f(u)\Delta u,\Delta u\ra-\la f'(u)|\nabla u|^2,\Delta u\ra\nonumber\\
&  \quad= \frac{1}{3}\la \left[m''(u)+f''(u)\right]|\nabla u|^2,|\nabla u|^2\ra -\la \Delta u, m(u)[1+W''(u)]\Delta u\ra\nonumber\\
&  \quad\quad -\la W''(u)\nabla u,m(u)\nabla u\ra-\la m(u)u_{xxx},u_{xxx}\ra.
\end{align}
The contribution associated with $\phi_2$ is
\begin{align}\label{eq:25}
F_{2,u}(u)\phi_2(u) & = \la h(u)|\nabla u|^2,u \ra_{H^1}=\la h(u)u, |\nabla u|^2\ra+\la \nabla(h(u)|\nabla u|^2), \nabla u\ra\nonumber\\
& = \la h(u)u, |\nabla u|^2\ra - \la h(u)|\nabla u|^2, \Delta u\ra = \la h(u)u, |\nabla u|^2\ra + \frac{1}{3}\la h'(u)|\nabla u|^2, |\nabla u|^2\ra,
\end{align}
{\color{fu} while the contribution associated with $\phi_3$ is
\begin{align}\label{eq:25a}
F_{2,u}(u)\phi_3(u) & = \la g(u),u \ra + \la g'(u)\nabla u, \nabla u\ra.
\end{align}
}
\emph{Term $I_3$ for $G=F_1$}. We rely on \eqref{eq:9} and the expression of $F_{1,uu}$ to compute the It\^o correction  
\begin{align}\label{eq:10}
& \frac{1}{2}\mbox{Tr}\left[F_{1,uu}(u)(\Phi(u)Q^{1/2})(\Phi(u)Q^{1/2})^T\right] \nonumber\\
& \quad = \beta(\beta+1)\sum_{r=0}^{\infty}{\lambda_r\sum_{s=0}^{\infty}{\sum_{z=0}^{\infty}{\la u^{-\beta-2}e_z,e_s\ra\la \sqrt{m(u)}e_r,e_{s,x}\ra\la \sqrt{m(u)}e_r,e_{z,x}\ra}}}.
\end{align}
\begin{rem}\label{rem:3}
One can convince oneself of the nature of \eqref{eq:10} by thinking of a finite-dimensional equivalent of the problem, formulated in terms of the matrices 
\begin{align}\label{eq:11}
Q_m=\mbox{diag}\left\{\sqrt{\lambda_1},\cdots,\sqrt{\lambda_m}\right\},\quad \left[\Phi_{m}(u)\right]_{i,r}:=-\la \sqrt{m(u)}e_r,\nabla e_i\ra,\quad i,r\in\{0,\cdots,m\},\\
\left[F_{1,uu}(u)\right]_m(e_i,e_r)=\beta(\beta+1)\int_{D}{u^{-\beta-2}e_ie_r\m x},\quad i,r\in\{0,\cdots,m\}\nonumber.
\end{align}
\end{rem}
We bound \eqref{eq:10} by using integration by parts, the Parseval identity in $L^2$ (for the sums over $z$ and $s$),  
the rapid decay of $\{\lambda_r\}_{r=0}^{\infty}$, and the fact that 
$\|(\m^k/\m x^k)e_r\|_{L^{\infty}}\leq C_kr^k$ (for the sum over $r$).
We obtain 
\begin{align}
&\beta(\beta+1)\sum_{r=0}^{\infty}{\lambda_r\sum_{s=0}^{\infty}{\sum_{z=0}^{\infty}{\la u^{-\beta-2}e_z,e_s\ra\la \sqrt{m(u)}e_r,e_{s,x}\ra\la \sqrt{m(u)}e_r,e_{z,x}\ra}}}\nonumber\\
& \quad =  \beta(\beta+1)\sum_{r=0}^{\infty}{\lambda_r\sum_{s=0}^{\infty}{\sum_{z=0}^{\infty}{\la u^{-\beta-2}e_z,e_s\ra\la \nabla\left(\sqrt{m(u)}e_r\right),e_{s}\ra\la \nabla\left(\sqrt{m(u)}e_r\right),e_{z}\ra}}}\nonumber\\
& \quad =  \beta(\beta+1)\sum_{r=0}^{\infty}{\lambda_r\sum_{s=0}^{\infty}{\la \nabla\left(\sqrt{m(u)}e_r\right),e_{s}\ra\la \nabla\left(\sqrt{m(u)}e_r\right),u^{-\beta-2}e_s\ra}}\nonumber\\
& \quad =  \beta(\beta+1)\sum_{r=0}^{\infty}{\lambda_r\la \left|\nabla\left(\sqrt{m(u)}e_r\right)\right|^2,u^{-\beta-2}\ra}\label{eq:5000}\\
&\quad \leq C(\beta,\{\lambda\}_{r})\left\{\la m^{-1}(u)(m'(u))^2u^{-\beta-2}\,\nabla u,\nabla u\ra + \int_{D}{m(u)u^{-\beta-2}\m x}\right\}.\label{eq:19}
\end{align}

\begin{rem}
Alternatively, one can identify \eqref{eq:5000} by using \cite[Section 3]{curtain1970ito}.
\end{rem}

\emph{Term $I_3$ for $G=F_2$}. We compute the It\^o correction 
\begin{align}\label{eq:12}
& \frac{1}{2}\mbox{Tr}\left[F_{2,uu}(u)(\Phi(u)Q^{1/2})(\Phi(u)Q^{1/2})^T\right]=\sum_{r=0}^{\infty}{\lambda_r\sum_{z=0}^{\infty}{(1+z^2)\la \sqrt{m(u)}e_r,e_{z,x}\ra^2}}\\
& \quad = \sum_{r=0}^{\infty}{\lambda_r\sum_{z=0}^{\infty}{\la \sqrt{m(u)}e_r,e_{z,x}\ra^2}}+\sum_{r=0}^{\infty}{\lambda_r\sum_{z=0}^{\infty}{z^2\la \sqrt{m(u)}e_r,e_{z,x}\ra^2}}=:T_1+T_2.\nonumber
\end{align}
Once again, the reader can convince oneself of the nature of \eqref{eq:12} by thinking of a finite-dimensional equivalent of the problem, thus relying on the matrices $Q_m$ and $\Phi_{m}(u)$ defined in \eqref{eq:11}, as well as on the matrix $[F_{2,uu}(u)]_m=\mbox{diag}\{(1+z^2)\}_{z=1,\cdots,m}$. See Remark \ref{rem:3} also.

We bound $T_2$. Given the nature of the trigonometric basis $\{e_r\}_{r=0}^{\infty}$, we have (for $r\geq 1$), that $re_{r,x}=\delta(r)\Delta e_{\sigma(r)}$, 
for some injective function $\sigma:\mathbb{N}\rightarrow \mathbb{N}$ and where $\delta(r)\in\{-1;+1\}$. We use integration by parts and the Parseval identity (for the sum over $z$) and obtain
\begin{align}
T_2 & = \sum_{r=0}^{\infty}{\lambda_r\sum_{z=0}^{\infty}{\la \sqrt{m(u)}e_r,\Delta e_z\ra^2}}=\sum_{r=0}^{\infty}{\lambda_r\sum_{z=0}^{\infty}{\la \Delta\left(\sqrt{m(u)}e_r\right),e_{z}\ra^2}}
 = \sum_{r=0}^{\infty}{\lambda_r\left\|\Delta\left(\sqrt{m(u)}e_r\right)\right\|^2}\label{eq:5001}\\
& \leq C\sum_{r=0}^{\infty}{\lambda_r\left[\left\| \left\{-\frac{1}{4}m^{-3/2}(m')^2+\frac{1}{2}m^{-1/2}(u)m''(u)\right\}|\nabla u|^2e_r\right\|^2+\left\|\frac{1}{2}m^{-1/2}(u)m'(u)\Delta ue_r\right\|^2\right.}\label{eq:3100}\\
& \quad +\left.\left\|m^{-1/2}(u)m'(u)\nabla u\,e_{r,x}\right\|^2+\left\|\sqrt{m(u)}\Delta e_r\right\|^2\right]\nonumber\\
& \leq C(\{\lambda_r\}_r)\left\{\la\left[ m^{-1}(u)(m''(u))^2+m^{-3}(u)(m'(u))^4\right]\nabla u|^2,|\nabla u|^2\ra \right.\nonumber\\
&\quad + \left.\la m^{-1}(u)(m'(u))^2\Delta u, \Delta u\ra+\la m^{-1}(u)(m'(u))^2\nabla u, \nabla u\ra + \int_{D}{m(u)\m x}\right\},\label{eq:23}
\end{align}
where the right-hand-side of \eqref{eq:5001} can also be inferred from \cite[Section 3]{curtain1970ito}.
\begin{rem}\label{rem:2} 
Given the polynomial nature of $m(u)\left|_{A_0\cup A_{\infty}}\right.$, it is easy to notice that the multiplying term $T_3:=-\frac{1}{4}m^{-3/2}(m')^2+\frac{1}{2}m^{-1/2}(u)m''(u)$ in \eqref{eq:3100} vanishes if and only if $\gamma_1=\gamma_2=2$. In all other cases, the terms making up $T_3$ are proportional to each other.
\end{rem}
As for $T_1$, the computation is simpler, and it reads, thanks to the Parseval inequality 
\begin{align}
T_1 & = \sum_{r=0}^{\infty}{\lambda_r\sum_{z=0}^{\infty}{\la \sqrt{m(u)}e_r, e_{z,x}\ra^2}}=\sum_{r=0}^{\infty}{\lambda_r\sum_{z=0}^{\infty}{\la \nabla\left(\sqrt{m(u)}e_r\right),e_{z}\ra^2}}\label{eq:5002}
 = \sum_{r=0}^{\infty}{\lambda_r\left\|\nabla\left(\sqrt{m(u)}e_r\right)\right\|^2}\\
& \leq C\sum_{r=0}^{\infty}{\lambda_r\!\left[\left\| m^{-1/2}(u)m'(u)\nabla u\,e_r\right\|^2+\left\|\sqrt{m(u)}e_{r,x}\right\|^2\right]}\\\nonumber
&  \leq C(\{\lambda_r\}_r)\left\{\la m^{-1}(u)(m'(u))^2\nabla u, \nabla u\ra + \int_{D}{m(u)\m x}\right\},\nonumber
\end{align}
where the right-hand-side of \eqref{eq:5002} can once again be inferred from \cite[Section 3]{curtain1970ito}. We deduce
\begin{align}\label{eq:13}
& \frac{1}{2}\mbox{Tr}\left[F_{2,uu}(u)(\Phi(u)Q^{1/2})(\Phi(u)Q^{1/2})^T\right] \leq C(\{\lambda_r\}_r)\left\{\la\left[ m^{-1}(u)(m''(u))^2+m^{-3}(u)(m'(u))^4\right]|\nabla u|^2,|\nabla u|^2\ra \right.\nonumber\\
&\quad \quad+ \left.\la m^{-1}(u)(m'(u))^2\Delta u, \Delta u\ra+\la m^{-1}(u)(m'(u))^2\nabla u, \nabla u\ra + \int_{D}{m(u)\m x}\right\},
\end{align} 
\emph{Term $I_4$ for $G=F_1$}. We rely on \cite[Theorem 4.36]{Da-Prato2014a} and bound the It\^o isometry term associated with $I_4$. We use integration by parts and the Parseval identity to deduce
\begin{align}\label{eq:3101}
& \sum_{r=0}^{\infty}{\lambda_r\left|\sum_{z=0}^{\infty}{-\beta\la u^{-\beta-1},e_z\ra\la\nabla\left(\sqrt{m(u)}e_r\right),e_z\ra}\right|^2}=\beta\sum_{r=0}^{\infty}{\lambda_r\left|\la u^{-\beta-1},\nabla\left(\sqrt{m(u)}e_r\right)\ra\right|^2}\nonumber\\
& \quad = \beta\sum_{r=0}^{\infty}{\lambda_r\left|\la (\beta+1)u^{-\beta-2}\nabla u,\sqrt{m(u)}e_r\ra\right|^2}
\leq C(\{\lambda_r\}_{r},\beta)\la u^{-\beta-2}m(u)\nabla u, u^{-\beta-2}\nabla u\ra.
\end{align}
Given the definition of $\tau_n$, we deduce that $I_4$ is a square-integrable martingale with mean zero, see \cite[Proposition 4.28]{Da-Prato2014a}. The contribution of $I_4$ can thus be neglected. 

\emph{Term $I_4$ for $G=F_2$}. Again relying on \cite[Theorem 4.36]{Da-Prato2014a}, we bound the It\^o isometry term associated with $I_2$. Similarly to \eqref{eq:3101}, we deduce
\begin{align}\label{eq:3001}
& \sum_{r=0}^{\infty}{\lambda_r\left|\sum_{z=0}^{\infty}{(1+z^2)\la u,e_z\ra\la\nabla\left(\sqrt{m(u)}e_r\right),e_z\ra}\right|^2}=\sum_{r=0}^{\infty}{\lambda_r\left|\sum_{z=0}^{\infty}{\la u,e_z-\Delta e_z\ra\la\nabla\left(\sqrt{m(u)}e_r\right),e_z\ra}\right|^2}\nonumber\\
& \quad \leq \sum_{r=0}^{\infty}{2\lambda_r\left|\la u,\nabla\left(\sqrt{m(u)}e_r\right)\ra\right|^2}+\sum_{r=0}^{\infty}{2\lambda_r\left|\la \Delta u,\nabla\left(\sqrt{m(u)}e_r\right)\ra\right|^2}\nonumber\\
& \quad = \sum_{r=0}^{\infty}{2\lambda_r\left|\la \nabla u,\sqrt{m(u)}e_r\ra\right|^2}+\sum_{r=0}^{\infty}{2\lambda_r\left|\la  u_{xxx},\sqrt{m(u)}e_r\ra\right|^2}\nonumber\\
& \quad \leq C(\{\lambda_r\}_{r})\left\{\la \nabla u,m(u)\nabla u\ra + \la u_{xxx},m(u)u_{xxx}\ra \right\}.
\end{align}
In this case, the definition of $\tau_n$ does not imply that $I_4$ is a square-integrable martingale with mean zero. This is due to the presence of the term $\la u_{xxx},m(u)u_{xxx}\ra$.

\subsection{Clustering contributions from the It\^o formula}\label{ss:2}

In the previous section we have provided bounds for the terms $I_2$, $I_3$, $I_4$ associated with the It\^o formula applied to the functionals $F_1(u)$ and $F_2(u)$. These bounds contain terms which can be clustered in five distinct families, identified as 
\begin{align}
& \int_{D}{p(u)}, \label{eq:15}\tag{F1}\\
& \la p(u)\Delta u, \Delta u\ra, \label{eq:16}\tag{F2}\\
& \la p(u)|\nabla u|^2,|\nabla u|^2\ra, \label{eq:17}\tag{F3}\\
& \la p(u)\nabla u, \nabla u\ra, \label{eq:18}\tag{F4}\\
& \la p(u)u_{xxx},u_{xxx}\ra, \label{eq:180}\tag{F5}
\end{align}
for some $p\in\mathcal{C}(0,\infty)$. Notice that all contributions to the It\^o formula are well defined, because of assumption \eqref{eq:3002}. With the exception of the terms in the right-hand-side of \eqref{eq:3101} (associated with the It\^o isometry of $I_4$ for the functional $F_1(u)$), we now cluster all the terms belonging to the same family.
\vspace{0.5 pc}\\
\emph{Terms of kind \eqref{eq:15}}. Relevant terms are gathered from \eqref{eq:13}, \eqref{eq:19}, {\color{fu}\eqref{eq:25a}, \eqref{eq:6}}, 
adding up to
\begin{align}\label{eq:20}
C(\{\lambda_r\}_r)\int_{D}{m(u)\m x}+C(\{\lambda_r\}_r,\beta)\int_{D}{m(u)u^{-\beta-2}\m x}{\color{fu}+\la g(u),u \ra+\la g(u),-\beta u^{-\beta-1}\ra}.
\end{align} 
\emph{Terms of kind \eqref{eq:16}}. Relevant terms are gathered from \eqref{eq:5}, \eqref{eq:21}, \eqref{eq:13}, 
adding up to
\begin{align}\label{eq:22}
& -\beta(\beta+1)\la \Delta u, m(u)u^{-\beta-2}\Delta u \ra-\la \Delta u, m(u)\Delta u\ra-\la\Delta u, m(u)W''(u)\Delta u\ra\nonumber\\
 & \quad + C(\{\lambda_r\}_r)\la m^{-1}(u)(m'(u))^2\Delta u, \Delta u\ra.
\end{align} 
\emph{Terms of kind \eqref{eq:17}}. Relevant terms are gathered from \eqref{eq:5}, \eqref{eq:21}, \eqref{eq:25}, \eqref{eq:13}, adding up to
\begin{align}\label{eq:24}
& C(\beta)\la (m(u)u^{-\beta-2})''|\nabla u|^2,|\nabla u|^2\ra + \frac{1}{3}\la m''(u)|\nabla u|^2,|\nabla u|^2\ra +\frac{1}{3}\la (m(u)W''(u))''|\nabla u|^2,|\nabla u|^2\ra\nonumber\\
& \quad + \frac{1}{3}\la h'(u)|\nabla u|^2, |\nabla u|^2\ra + C(\{\lambda_r\}_r)\la\left[ m^{-1}(u)(m''(u))^2+m^{-3}(u)(m'(u))^4\right]|\nabla u|^2,|\nabla u|^2\ra.
\end{align} 
\emph{Terms of kind \eqref{eq:18}}. Relevant terms are gathered from \eqref{eq:5}, \eqref{eq:6}, \eqref{eq:21}, \eqref{eq:25}, \eqref{eq:19}, \eqref{eq:13}, {\color{fu}\eqref{eq:25a}}, \eqref{eq:3001}, adding up to
\begin{align}\label{eq:26}
& -\beta(\beta+1)\la W''(u)\nabla u,m(u)u^{-\beta-2}\nabla u\ra -C(\beta)\la h(u)|\nabla u|^2,u^{-\beta-1}\ra -\la W''(u)\nabla u,m(u)\nabla u\ra\nonumber\\
& \quad +\la h(u)u, |\nabla u|^2\ra + C(\{\lambda\}_r)\la m^{-1}(u)(m'(u))^2u^{-\beta-2}\nabla u,\nabla u\ra + C(\{\lambda\}_{r})\la m^{-1}(u)(m'(u))^2\nabla u, \nabla u\ra\nonumber\\
& \quad +{\color{fu}\la g'(u)\nabla u, \nabla u\ra} + C(\{\lambda\}_r)\la \nabla u,m(u)\nabla u \ra.
\end{align}
\emph{Terms of kind \eqref{eq:180}}. Relevant terms are gathered from \eqref{eq:21}, \eqref{eq:3001}, adding up to 
\begin{align}\label{eq:3102}
(C(\{\lambda_r\}_{r})-1)\la m(u)u_{xxx},u_{xxx}\ra.
\end{align}

\subsection{Parameter tuning on $A_0\cup A_{\infty}$}\label{ss:3}

We now look for conditions on the parameters $p,B_h,p_h,c_h,{\color{fu}B_g,p_g,c_g},\gamma_1,\gamma_2,$ and $\{\lambda_r\}_{r=0}^{\infty}$ in order to obtain \eqref{eq:50}. More specifically, we look for conditions on these parameters in such a way that some of the terms in \eqref{eq:20}, \eqref{eq:22}, \eqref{eq:24}, \eqref{eq:26}, and \eqref{eq:3102} can be bounded by the two Gronwall type terms $\int_{D}{u^{-\beta}}$ and $\|u\|_{H^1}^2$, while the remaining can be bounded by constants. 
In order to easily identify the relevant parameters, for each of the families {\color{fu}\eqref{eq:15}}--\eqref{eq:18} we draw two summary tables. As for the first table:
\begin{description}
\item (i) each column is associated with a term of the family in question, the terms being listed in order of appearance in the corresponding expression among \eqref{eq:20}, \eqref{eq:22}, \eqref{eq:24}, and \eqref{eq:26}. 
\item (ii) the second row shows the degree of the monomial restriction $p(u)\left|_{A_0}\right.$. 
\item (iii) the first row shows the constants multiplying $p(u)\left|_{A_0}\right.$.
\end{description}
We will denote this kind of table by $\mathcal{A}_{0}$. As for the second table, everything is defined in the same way, but with the region $A_0$ replaced by $A_{\infty}$. We will denote this kind of table by $\mathcal{A_\infty}$. We deal with the analysis on the region $A_1$ in the following subsection.
\vspace{0.5 pc}\\
{\color{fu}
\emph{Summary table and conditions for family \eqref{eq:15}}. Tables $\mathcal{A}_0$ and $\mathcal{A}_{\infty}$ summarising \eqref{eq:20} are given in Figure \ref{f:1a}.
\begin{figure}[htbp]
     \begin{center}
     $\mathcal{A}_0=$
     \begin{tabular}{|L|L|L|L|}
     \hline
     C(\{\lambda_r\}) & C(\beta,\{\lambda_r\}) & B_g & -\beta B_g \\
     \hline
    \gamma_1 & \gamma_1-\beta-2 & -p_g+1 & -\beta-1-p_g\\
    \hline
     \end{tabular}
     \end{center}
     \begin{center}
     $\mathcal{A}_\infty=$
     \begin{tabular}{|L|L|L|L|}
     \hline
      C(\{\lambda_r\}) & C(\beta,\{\lambda_r\}) & -B_g & \beta B_g \\
     \hline
    \gamma_2 & \gamma_2-\beta-2 & c_g+1 & -\beta-1+c_g\\
    \hline
     \end{tabular}
     \end{center}
     \caption{Tables $\mathcal{A}_0$ and $\mathcal{A}_\infty$ for family \eqref{eq:15}.}
     \label{f:1a}
\end{figure}
Condition \eqref{eq:45} insures that the leading polynomial order is contained in the fourth (respectively, third) column for $\mathcal{A}_{0}$ (respectively, $\mathcal{A}_{\infty}$). The contribution given by the family \eqref{eq:15} is then bounded by a constant.} 
\vspace{0.5 pc}\\
\emph{Summary table and conditions for family \eqref{eq:16}}. Tables $\mathcal{A}_0$ and $\mathcal{A}_{\infty}$ summarising \eqref{eq:22} are given in Figure \ref{f:1b}.
\begin{figure}[htbp]
     \begin{center}
     $\mathcal{A}_0=$
     \begin{tabular}{|L|L|L|L|}
     \hline
    -C(\beta) & -1 & -p(p+1) & \gamma_1^2C(\{\lambda_r\}_r) \\
     \hline
     \gamma_1-\beta-2 & \gamma_1 & \gamma_1-p-2 & \gamma_1-2\\
    \hline
     \end{tabular}
     \end{center}
     \begin{center}
     $\mathcal{A}_\infty=$
     \begin{tabular}{|L|L|L|L|}
     \hline
       -C(\beta) & -1 & -p(p+1) & \gamma_2^2C(\{\lambda_r\}_r) \\
     \hline
     \gamma_2-\beta-2 & \gamma_2 & \gamma_2-p-2 & \gamma_2-2\\
    \hline
     \end{tabular}
     \end{center}
     \caption{Tables $\mathcal{A}_0$ and $\mathcal{A}_\infty$ for family \eqref{eq:16}.}
     \label{f:1b}
\end{figure}
For both $\mathcal{A}_0$ and $\mathcal{A}_{\infty}$, the only positive contribution comes from column 4. This contribution can be compensated, e.g., with column 1 (in the case of $\mathcal{A}_{0}$) or column 2 (in the case of $\mathcal{A}_{\infty}$) by using \eqref{eq:43}. 
\vspace{0.5 pc}\\
\emph{Summary table and conditions for family \eqref{eq:17}}.
Tables $\mathcal{A}_0$ and $\mathcal{A}_{\infty}$ summarising \eqref{eq:24} are given in Figure \ref{f:1c}.
\begin{figure}[htbp]
     \begin{center}
     $\mathcal{A}_0=$
     \begin{tabular}{|L|L|L|L|L|}
     \hline
    C(\gamma_1,\beta) & C(\gamma_1) & p(p+1)(\gamma_1-p-2)(\gamma_1-p-3) & -p_hB_h/3 & C(\gamma_1)C(\{\lambda_r\}_r) \\
     \hline
    \gamma_1-\beta-4 & \gamma_1-2 & \gamma_1-p-4 & -p_h-1 & \gamma_1-4 \\
    \hline
     \end{tabular}
     \end{center}
     \begin{center}
     $\mathcal{A}_\infty=$
     \begin{tabular}{|L|L|L|L|L|}
     \hline
       C(\gamma_2,\beta) & C(\gamma_2) & p(p+1)(\gamma_2-p-2)(\gamma_2-p-3) & -c_hB_h/3 & C(\gamma_2)C(\{\lambda_r\}_r) \\
     \hline
     \gamma_2-\beta-4 & \gamma_2-2 & \gamma_2-p-4 & c_h-1 & \gamma_2-4 \\
    \hline
     \end{tabular}
     \end{center}
     \caption{Tables $\mathcal{A}_0$ and $\mathcal{A}_\infty$ for family \eqref{eq:17}.}
     \label{f:1c}
\end{figure}
For $\mathcal{A}_{0}$ (respectively, $\mathcal{A}_{\infty}$) we can pick $p_h,B_h$ big enough (respectively, $c_h,B_h$ big enough) so that column 4 contains the leading-order monomial, with also sufficiently big multiplicative constant. Thus column 4 compensates all the other columns. We have thus invoked \eqref{eq:44}.\vspace{0.5 pc}\\
\emph{Summary table and conditions for family \eqref{eq:18}}.
Tables $\mathcal{A}_0$ and $\mathcal{A}_{\infty}$ summarising \eqref{eq:26} are given in Figure \ref{f:1d}.
\begin{figure}[htbp]
     \begin{center}
     $\mathcal{A}_0=$
     \begin{tabular}{|L|L|L|L|L|L|L|L|}
     \hline
     -C(\beta)p(p+1) & -C(\beta)B_h & -p(p+1) & B_h & C(\{\lambda_r\}_r,\gamma_1) & C(\{\lambda_r\}_r,\gamma_1) & {\color{fu}-B_gp_g} & 1 \\
     \hline
     \gamma_1-\beta-p-4 & -p_h-\beta-1 & \gamma_1-p-2 & -p_h+1 & \gamma_1-\beta-4 & \gamma_1-2 & {\color{fu}-p_g-1} & \gamma_1 \\
    \hline
     \end{tabular}
     \end{center}
     \begin{center}
     $\mathcal{A}_\infty=$
     \begin{tabular}{|L|L|L|L|L|L|L|L|}
     \hline
      -C(\beta)p(p+1) & C(\beta)B_h & -p(p+1) & -B_h & C(\{\lambda_r\}_r,\gamma_2) & C(\{\lambda_r\}_r,\gamma_2) & {\color{fu}-B_gc_g} & 1 \\
     \hline
      \gamma_2-\beta-p-4 & c_h-\beta-1 & \gamma_2-p-2 & c_h+1 & \gamma_2-\beta-4 & \gamma_2-2 & {\color{fu}c_g-1} & \gamma_2 \\
    \hline
     \end{tabular}
     \end{center}
     \caption{Tables $\mathcal{A}_0$ and $\mathcal{A}_\infty$ for family \eqref{eq:18}.}
     \label{f:1d}
\end{figure}


If we invoke \eqref{eq:45} for both $\mathcal{A}_0$ and $\mathcal{A}_{\infty}$, then column 7 contains the leading order. Thus all other columns are compensated by a constant.
\vspace{0.5 pc}\\
\emph{Conditions for family \eqref{eq:180}}. Contribution \eqref{eq:3102} is negative as long as we invoke \eqref{eq:43}.

\subsection{Parameter tuning on $A_{1}$}\label{ss:4}

Conditions \eqref{eq:43}-\eqref{eq:45} are also enough to provide the same conclusions, as in Subsection \ref{ss:3}, for the families \eqref{eq:15}-\eqref{eq:180} analysed over $A_1$. More specifically: the domain $D$ being bounded, the continuity of $m$ does not alter the estimate for the family \eqref{eq:15}; the estimate for the family \eqref{eq:16} still holds due to \eqref{eq:43} and \eqref{eq:1000c}; 
the estimate for the family \eqref{eq:17} still holds due to \eqref{eq:1000c}--\eqref{eq:1000b} and \eqref{eq:44}; the estimate for the family \eqref{eq:18} still holds, due to \eqref{eq:1000c} and the fact that we are allowed to bound everything with a constant times $|\nabla u|^2$, so there is no issue in the local behaviour in a neighbourhood of $u=1$. Finally, thanks to \eqref{eq:1000c}, nothing needs to be adapted for the family \eqref{eq:180}. \vspace{0.5 pc}\\
We can complete the proof of Proposition \ref{p:1} by taking the expected value in the It\^o formula \eqref{eq:4} for $G=F_1+F_2$.

\section{Analysis of results}\label{s:4}

We compare our setting to that of J. Fischer and F. Gr\"un, whose paper \cite{Fischer2018a} has inspired us to this work. In \cite{Fischer2018a}, existence of a $\mathbb{P}$-a.s. positive solution to the conservative thin-film equation (i.e., equation \eqref{eq:1} with $h\equiv g\equiv 0$) is established in the case of quadratic mobility $m(u)=u^2$. This specific mobility, corresponding to $\gamma_1=\gamma_2=2$ in our notation, results in a linear stochastic noise which makes $h$ and $g$ unnecessary in the argument.
We detail this last statement by making a direct comparison to our computations.
\vspace{0.5 pc}\\
\emph{No need for $h$}. No term belonging to the family \eqref{eq:17} arises when $\gamma_1=\gamma_2=2$. Firstly, the It\^o correction applied to $\|\nabla u\|^2$ does not produce any such term, because of the linear nature of $\sqrt{m(u)}=u$, see Remark \ref{rem:2}. We can thus drop the \eqref{eq:17}-term in \eqref{eq:13}, which corresponds to column 5 in $\mathcal{A}_0$ and $\mathcal{A}_{\infty}$. Secondly, if one picks $p:=\beta>2$ (this is compatible with the setting in \cite{Fischer2018a}), some computations can be performed better. In particular, one can combine the drift contributions coming from the It\^o formula applied to functional $F_3(u):=\|\nabla u\|^2+F_1(u)$, thus getting, for $p_r:=-\Delta u+W'(u)$ 
\begin{align*}
& \la u_x,\nabla(-\nabla(m(u)\nabla(-p_r)))\ra+\la W'(u),-\nabla(m(u)\nabla(-p_r))\ra\\
& \quad = \la \Delta u, \nabla(m(u)\nabla(-p_r))\ra + \la \nabla [W'(u)],m(u)\nabla(-p_r) \ra \\
& \quad= -\la \nabla[\Delta u], m(u)\nabla(-p_r)\ra + \la \nabla[W'(u)],m(u)\nabla(-p_r) \ra = -\la p_{r,x},m(u)p_{r,x}\ra \leq 0.
\end{align*}
The above computation is a way of regrouping relevant drift terms in a slightly differently way. More specifically, the final term $\la p_{r,x},m(u)p_{r,x}\ra$ can be rewritten as
\begin{align*}
\la m(u)u_{xxx},u_{xxx}\ra+\la W''(u)\nabla u,m(u)W''(u)\nabla u\ra - 2\la u_{xxx},m(u)W''(u)\nabla u\ra
\end{align*}
and the last term in above expression contains the contributions of columns 1 and 3 of $\mathcal{A}_{0}$ and $\mathcal{A}_{\infty}$ (which coincide, as $\beta=p$, see \eqref{eq:5} and \eqref{eq:21}).
Finally, column 2 of $\mathcal{A}_0$ and $\mathcal{A}_{\infty}$ is dealt with by not computing the It\^o formula for $\|u\|^2$ at all, as one relies on Poincar\'e inequality arguments based on the conservation of mass. 
One is then left only with column 4 of $\mathcal{A}_0$ and $\mathcal{A}_{\infty}$, which are associated with $h$.
\begin{rem}
In \cite{Fischer2018a}, the quantity $-\la p_{r,x},m(u)p_{r,x}\ra$ is used to balance the It\^o isometry term coming from the stochastic noise given by a suitable combination of $F_1$ and $F_2$. In this paper, we have analysed $F_1$ and $F_2$ separately, thus the quantity $-\la p_{r,x},m(u)p_{r,x}\ra$ has not quite emerged.
\end{rem}
\emph{No need for ${\color{fu}g}$}. This follows under the weaker assumptions $\gamma_2\leq 2$, $2\leq \gamma_1\leq 2+\beta$. The first term in \eqref{eq:20} is of Gronwall type, simply because
\begin{align*}
\int_{D}{m(u)\m x}\leq C+\|u\|^{\gamma_2}_{L^{\gamma_2}}\leq C + C\|u\|_{H^1}^2.
\end{align*} 
As for the second term in \eqref{eq:20}, it is also of Gronwall type. We write
\begin{align*}
C(\{\lambda_r\}_r)\int_{D}{m(u)u^{-\beta-2}\m x}\leq C(\{\lambda_r\}_r)\int_{D}{u^{\gamma_1-\beta-2}\m x}+C(\{\lambda_r\}_r)\int_{D}{u^{\gamma_2-\beta-2}\mathbf{1}_{u>1+\ep}\m x}+C.
\end{align*}
This yields 
\begin{align*}
C(\{\lambda_r\}_r)\int_{D}{m(u)u^{-\beta-2}\m x}\leq C(\{\lambda_r\}_r)\int_{D}{u^{\gamma_1-\beta-2}\m x}+C.
\end{align*}
For $2\leq\gamma_1<\beta+2$ and $\beta>2$ we get that $-\beta/(\gamma_1-\beta-2)\geq 1$. We use the H\"older inequality to obtain
\begin{align*}
C(\{\lambda_r\}_r)\int_{D}{u^{\gamma_1-\beta-2}\m x}\leq C(\{\lambda_r\}_r)\left(\int_{D}{u^{-\beta}\m x}\right)^{\frac{\gamma_1-\beta-2}{-\beta}}\leq C(\{\lambda_r\}_r)\int_{D}{u^{-\beta}\m x}+C.
\end{align*} 
When $\gamma_1=\beta+2$, the above inequality is also trivially valid. 
This means that columns 1 and 2 of $\mathcal{A}_0$ and $\mathcal{A}_{\infty}$ for the family \eqref{eq:15} are bounded by Gronwall terms, and ${\color{fu}g}$ is thus superfluous. 
\begin{rem}
It is worth noticing that, in the conservative case with quadratic mobility, the potential $W$ is actually needed. The potential $W$ is only involved in bounding all the terms in family \eqref{eq:18}, while it is not necessary to deal with the families \eqref{eq:15}, \eqref{eq:16}, \eqref{eq:17}, and \eqref{eq:180}. In the non-conservative case with mobility $m(u)$ not being quadratic, the use of $W$ can be bypassed by properly tuning $h$, which is needed for the family \eqref{eq:17} anyway. As a matter of fact, we can not use $W$ only, and we may actually not use it at all, as $h$ carries the leading order. 
\end{rem}
The contents of this section have shown that the potential $h$ is concerned with addressing nonlinearities of the stochastic noise of \eqref{eq:1} (i.e., analysis for $\gamma_1\neq 2$ or $\gamma_2\neq 2$), while ${\color{fu}g}$ is concerned with being able to deal with noise of ``large'' size in regimes of both low and high density $u$ (i.e., analysis for $\gamma_1<2$ and $\gamma_2>2$). In particular, the terms $h(u)|\nabla u|^2$ and ${\color{fu}g(u)}$ appear to be a plausible drift correction for the specific case of the Dean-Kawasaki model in \eqref{eq:2000}, which corresponds to $\gamma_1=\gamma_2=1$. \vspace{0.5 pc}\\  
\section{Considerations on a Galerkin discretisation of the problem}\label{s:5}

In this work we have dealt with an \emph{a priori} regularity analysis for solutions to \eqref{eq:1}. More specifically, we have assumed the existence of a local regular solution to \eqref{eq:1}, and we have shown that it can be extended up to any given time $T>0$ while also being positive $\mathbb{P}$-a.s. We devote this section to explaining the major difficulties one encounters when trying to prove existence of local solutions to \eqref{eq:1} in the conservative case (corresponding to $h\equiv 0$, ${\color{fu}g\equiv 0}$). 
\vspace{0.5 pc}\\
One may rely on a Galerkin scheme for a spatial discretisation of the problem. Two natural basis choices come up: (i) the trigonometric basis, see Subsection \ref{ss:10}; (ii) the hat basis for the space of periodic linear finite elements on the uniform grid $\{0,h,2h,\cdots,2\pi-h,2\pi\}$, where $h$ in an integer fraction of $2\pi$, see \cite{Fischer2018a}. \vspace{0.5 pc}\\
The use of the trigonometric basis might seem slightly more suitable to deal with the differential operators of \eqref{eq:1}. However, it is subject to a disadvantage. 
For $m:=2\pi h^{-1}$, let $u_m$ be the solution to the $m$-dimensional Galerkin approximation of \eqref{eq:1} with respect to an $L^2$-projection onto $V_m:=\{e_1,\cdots,e_m\}$. It is not hard to see that computing the It\^o formula for the functional $F(u_m)$, where $F$ is the same as in Proposition \ref{p:1}, leads to a few terms carrying a projection operator $\pi_m$ onto $V_m$. In particular, one gets such a projection for the drift component associated with $F_1$. This is an issue, as having projections on the drift term annihilates the compensation that such term could potentially have on the positive terms coming from the It\^o correction for $F_1$ and $F_2$. One can avoid the appearance of such projections by only considering quadratic quantities in $u_m$, such as $F_2(u_m)$. However, one loses any indication of positivity of the solutions $u_m$, which may only be defined up to a random time $\tau$; this is primarily due to the function $W$ not being bounded near the origin, thus preventing us from using the standard existence theory (see, e.g., \cite[Chapter IV, Theorem 2.2]{ikedawatanabe}). 
%
%
%
%
%
%
%
%
One can not get around this issue by simply smoothening the potential $W$ near the origin, as to do so would not provide uniform estimates for $\mean{F(u_m)}$; one can intuitively see this from the summary tables given in Subsection \ref{ss:3}.
\vspace{0.5 pc}\\
On the other hand, the use of the hat basis proved to be successful in \cite{Fischer2018a} in the case of quadratic mobility. We limit ourselves to briefly summarising the two main reasons for this. Firstly, one may rely on the so called \emph{entropy consistency} for the discrete mobility \cite{grunrumpf}, which allows to discretise the quadratic mobility in a piecewise constant function, for the benefit of relevant integral equations and of projection purposes onto the finite-dimensional Galerkin approximation space.  
Secondly, the solution $u_m$ being piecewise linear, it has piecewise constant derivative $u_{m,x}$. This fact allows to detach contributions involving the quadratic term $\left|u_{m,x}\right|^2$ from the contribution given by the (nonlinear) term $W''(u_m)$, thus simplifying the analysis. Moreover, the contribution given by $W''(u_m)$ is in turn provided by the hat basis spatial discretisation of the problem, which allows to suitably bound the ratios of the values of $u_m$ at adjacent grid nodes. 
These key observations allow the authors in \cite{Fischer2018a} to effectively deal with the nonlinearities of the problem, represented by the quadratic mobility and polynomial potential $W$, within the framework of a Galerkin scheme associated with both positivity and appropriate tightness arguments for the solutions $u_m$. However, this Galerkin approximation scheme does not seem to be extendable (at least in the conservative case) to mobilities whose square roots have unbounded first derivatives, i.e., in which either $\gamma_1<2$ or $\gamma_2>2$. One can find a justification of the previous statement by keeping in mind our discussion for the need of $h$ and $g$ given in Section \ref{s:4}.

\section{Conclusions}\label{ss:6}

For equation \eqref{eq:1}, non-conservative contributions $h$ and ${\color{fu}g}$ appear to be necessary in order to show a priori positivity of solutions in the case of non-quadratic mobility $m$. The role of $h$ is to compensate for nonlinearities arising from the It\^o calculus associated with relevant functionals of the unknown process $u$, while the role of ${\color{fu}g}$ is to compensate for large noise in low and high density regimes. In particular, the Dean-Kawasaki model seems to require a drift correction. The a priori positivity analysis has been performed by using a functional representation with respect to the trigonometric basis of $L^2$. Establishing existence of local solutions (which could then be extended up to any time $T>0$ while preserving positivity) seems to be unpractical if one is to use a Galerkin approximation scheme with respect to this basis; in the conservative case, there seems to be a good chance to prove existence of positive solutions with a Galerkin scheme with respect to the hat basis, but only in the case of mobilities whose square roots have bounded first derivatives ($\gamma_1>2$ and $\gamma_2<2$). If one is to consider different ranges of $\gamma_1$ and $\gamma_2$, then non-conservative corrections could be of use within the hat basis discretisation framework.  
\paragraph{Acknowledgements} The Author is supported by a scholarship from the EPSRC Centre for Doctoral Training in Statistical Applied
Mathematics at Bath (SAMBa), under the project EP/L015684/1. The Author wishes to thank his Ph.D. supervisors Johannes Zimmer and Tony Shardlow for their valuable suggestions and constant guidance.  


\def\cprime{$'$} \def\cprime{$'$} \def\cprime{$'$}
  \def\polhk#1{\setbox0=\hbox{#1}{\ooalign{\hidewidth
  \lower1.5ex\hbox{`}\hidewidth\crcr\unhbox0}}} \def\cprime{$'$}
  \def\cprime{$'$}

%

\end{document}